\theoremstyle{definition}
\theoremstyle{remark}
\newtheorem{theorem}{Theorem}[section]
\newtheorem{proposition}[theorem]{Proposition}
\theoremstyle{definition}
\renewcommand{\r}{\text{Ric}}
\let\c@equation\c@thm
\numberwithin{equation}{section}
\title[On the Moduli Space of Ricci flat metrics on a $K3$ surface]{On the Topology of the Moduli Space of Ricci flat Metrics on a $K3$ surface}
\author{David Degen}
\address{Karlsruher Institut fÜr Technologie, 
Institut fÜr Algebra und Geometrie,
Arbeitsgruppe Differentialgeometrie 
Englerstr.  2, D-76131 Karlsruhe, Germany}
\email{david.degen@kit.edu}
\date{ \today\\
\textit{E-mail}: \tt{david.degen@kit.edu}}
\begin{document}
\begin{abstract}
We show that the moduli space of Ricci flat metrics of unit volume (including orbifold metrics) on a K3 surface is simply connected and that it has the same rational cohomology as the automorphism group of the K3 lattice $(-E_8)^{\oplus 2}\oplus U^{\oplus 3}$.
\end{abstract}
\maketitle
\section{Introduction}
Ricci flat metrics in dimension 4 have been studied for more than 100 years, prominently from their appearance in General Relativity but also their moduli spaces in String Theory and Mirror Symmetry. However many important problems remain unsolved, like finding all closed $4$-manifolds admitting a Ricci flat metric. The only known examples of smooth Ricci flat Riemannian metrics on such manifolds are finitely covered by a 4-torus $T^4$ or a $K3$-surface. On the other hand, in both cases one has a rather good description to the corresponding  moduli space of Ricci flat metrics $\mathcal{M}_{Ric=0}(M)$. These moduli spaces can be described as a bi-quotient
\[
	\Gamma \backslash L / K,
\]
where $\Gamma$ is a Lattice in the Lie group $L$ and $K$ a maximal compact subgroup of $L$. A natural question is then to ask for the topology of these spaces. In case of $M$ being a $4$ torus one finds that any Ricci flat metric is in fact flat, so that  $\mathcal{M}_{Ric=0}(T^4)$ coincides with the moduli space of flat structures on $M$. This space is known to be homeomorphic to
\[
	GL(4,\mathbb{Z})\backslash GL(4)/O(4)
\]
and by results of Tuschmann and Wiemeler on the moduli spaces of flat structures on tori \cite{Tuschmann.} one finds that $\mathcal{M}_{Ric=0}(T^4)$ is simply connected and that its third rational homotopy group is isomorphic to $\mathbb{Q}$. In this note we show that a similar result holds for K3 surfaces. Here the moduli space can be identified with 
\[
 SO(\Lambda_{K3}) \backslash O(3,19)/ SO(3)\times O(19).
\]
However, for this moduli space we allow the metrics to have orbifold type singularities. Precisely we show the following theorem.
\subsection*{Main Theorem}
The moduli space of Ricci flat metrics of unit volume, including orbifold metrics, on a $K3$ surface is simply connected. 
\\
\\
Furhter, we show that this space has the same rational cohomology as $O(\Lambda_{K3})$, the automorphism group of $H^2(X,\mathbb{Z})$ of a $K3$ surface $X$ with its cup-pairing.
\\
This note is organized as follows: In the first section we provide some background on $K3$ surfaces, which for example can be found in \cite{Huybrechts.2016}. Further, we describe the moduli space of Ricci flat metrics as a Grassmann space of positive oriented $3$-planes, modulo the automorphism group of the $K3$ lattice. 
In the second section we give the proof of our main theorem, by showing that each element of a generating set of $O(\Lambda_{K3})$ has a fixed point in $L/K$ and that the quotient of such an action must be simply connected.
\section{Preliminaries}
A $K3$ surface $X$ can be defined as a simply connected compact complex surface with vanishing first Chern class.
Prominent examples are given by the Fermat surface, defined by the zero locus of  
\[
	z_1 ^4+z_2 ^4+z_3 ^4+ z_4 ^4
\]
in $\mathbb{C}P^3$ or more general as any hypersurface of degree $4$ in $\mathbb{C}P^3$. Other examples are given by blowing up the singularities in an involuted $4$-torus. More precisely, let $\Lambda$ be a lattice in $\mathbb{C}^2$ and let $T^4$ be the torus $\mathbb{C}^2 / \Lambda$. Then on this torus the group $\mathbb{Z}_2$ is acting by taking $(z_1,z_2)$ to $(-z_1,-z_2)$ and the resulting orbit space, $T^4/\mathbb{Z}_2$, is a complex orbifold having $16$ singular points. Taking the blow up of each singularity results in having a $K3$ surface, called Kummer surface. It is interesting to note that the Fermat surface is in fact a Kummer surface. However, not every Kummer surface is a hypersurface, they might not even be algebraic and in general there are an abundance of different types of $K3$ surfaces. On the other hand it is a well known fact that when one forgets about the complex structure, and only views a $K3$ surface as a differentiable manifold, then they are all the same, i.e.\ any two $K3$ surfaces are diffeomorphic. 
\\
\\
Let $M$ be the underlying differentiable manifold of a $K3$ surface.
One very important topological invariant for studying $K3$ surfaces, i.e.\ complex structures on $M$, is given by the second cohomology group $H^2(M,\mathbb{Z})$ together with the cup-product $(\cdot,\cdot)$, or dually, by homology with its intersection pairing.
This defines a lattice which is even and unimodular of signature $(3,19)$. Since such lattices are unique, the lattice $H^2(M,\mathbb{Z})$ is isomorphic to the lattice with intersection matrix given by 
\[
(-E_8)\oplus (-E_8)\oplus \begin{pmatrix}
0 & 1 \\
1 & 0
\end{pmatrix} \oplus \begin{pmatrix}
0 & 1 \\
1 & 0
\end{pmatrix} \oplus \begin{pmatrix}
0 & 1 \\
1 & 0
\end{pmatrix},
\] 
where the matrix $E_8$ is associated to the Dynkin diagram
\begin{center}
  \begin{tikzpicture}[scale=.4]
    \draw (-1,1) node[anchor=east]  {};
    \foreach \x in {0,...,6}
    \draw[thick,xshift=\x cm] (\x cm,0) circle (3 mm);
    \foreach \y in {0,...,5}
    \draw[thick,xshift=\y cm] (\y cm,0) ++(.3 cm, 0) -- +(14 mm,0);
    \draw[thick] (4 cm,2 cm) circle (3 mm) ;
    \draw[thick] (4 cm, 3mm) -- +(0, 1.4 cm);
  \end{tikzpicture}.
\end{center}
That is, to each node we associate a basis vector $e_i$ and the intersection matrix $E_8$ is given by $(e_i,e_j)=2$ if $i=j$, $(e_i,e_j)=-1$ if $e_i$ and $e_j$ are connected by an edge, and $(e_i,e_j)=0$ otherwise.
We will refer to this lattice as the $K3$ lattice and denote it by $\Lambda_{K3}$. Further, note that $H^2(M;\mathbb{R})$ is isomorphic to $\Lambda_{K3}\otimes \mathbb{R}$, which defines a pseudo euclidean space $(\mathbb{R}^{22},(\cdot ,\cdot))$, where $(\cdot,\cdot)$ is a bilinear pairing of signature $(3,19)$. We will also  denote this by $\mathbb{R}^{3,19}$.
\\
\\
Define the automorphism group $O(\Lambda_{K3})$ of the $K3$ lattice as the group of isomorphisms $g\colon \Lambda_{K3}\to \Lambda_{K3}$ such that $(gx.gy)=(x.y)$ for all $x,y\in \Lambda_{K3}$. Notice that $O(\Lambda_{K3})$ is a discrete subgroup of the Lie group \[
O(3,19)=\{ g \in GL_\mathbb{R}(22)\  \vert \ (gx,gy)=(x,y) \text{ for all } x,y \in \mathbb{R}^{22}\}.
\]
The group $O(3,19)$ naturally acts on $\mathbb{R}^{3,19}$ and has $4$ connected components corresponding to whether orientation on the positive definite subspaces are maintained or being reversed. Let $SO(\Lambda_{K3}):=O(\Lambda_{K3})\cap SO(3,19;\mathbb{R})$, and let $G^{3}_{o,+}(\mathbb{R}^{3,19})$ denote the Grassman space of oriented and positive $3$-planes in $\mathbb{R}^{3,19}$. \\
\\
By our definition of $K3$ surfaces, plus the fact that any $K3$ surface is Kähler \cite{Siu.1983}, it follows from the famous Calabi-Yau theorem \cite{Yau.1978} that any $K3$ surface possesses a Ricci flat Kähler metric. 
On the other hand, if $M$ is the underlying smooth manifold of a $K3$ surface, then any Ricci flat metric $g$ on $M$ will determine a whole two-sphere of complex structures, each giving $M$ the structure of a $K3$ surface and making $g$ Kähler, i.e.\ $g$ determines a hyperkähler structure on $M$. Further, any Einstein metric on $M$ will turn out to be Ricci-flat and the terms Einstein metric, Calabi-Yau metric and hyperkähler metric can be used interchangeably, see \cite{Besse.2008}. It is that interplay between complex structures and Ricci flat metrics which makes it possible to study the moduli space $\mathcal{M}_{\r=0}(M)$ by means of the classical Torelli theorem for $K3$-surfaces, which states that two $K3$-surfaces $X$, $X'$ are isomorphic if and only if there is a Hodge isometry between $H^2(X;\mathbb{Z})$ and $H^2(X';\mathbb{Z})$.
In the following we will define the refined period map, which gives a good picture of the moduli space $\mathcal{M}_{\r=0}(M)$ by studying Hodge structures relative to the $K3$ lattice. However, we will only present a very small picture of this map and refer to some of the original works, \cite{PjateckiuSapiro.1971},\cite{Burns.1975},\cite{Looijenga.1981},\cite{Todorov.1980},\cite{Morrison.1983},\cite{Looijenga.1981b},\cite{Kobayashi.1987} and the expository paper \cite{Kobayashi.1990} for more details on this topic. 
\\
\\
We start by defining $\mathcal{T}_{\r=0}(M)$ to be the Teichmüller space of Ricci flat metrics of unit volume on $M$. That is, $\mathcal{T}_{\r=0}(M)$ is the quotient of the space of unit volume Ricci flat metrics on $M$ by $\text{Diff}^{\ id}(M)$, the group of diffeomorphisms which induce the identity on $H^2(M;\mathbb{Z})$. Then the refined period map $\mathcal{P}$ is defined by mapping a Ricci flat metric $g$ to the space of harmonic and self-dual two forms $\mathcal{H}^+(g)$ with respect to the metric $g$. Notice that $\mathcal{H}^+(g)$ is an oriented positive $3$-plane in $H^2_{DR}(M)$ so that the refined period map is given by
\begin{align*}
\mathcal{P}\colon \mathcal{T}_{\r=0}(M) &\to G_{o,+}^3(H^2(M,\mathbb{R}))
\\
[g] & \mapsto \mathcal{H}^+(g).
\end{align*}
Using the Torelli theorems for $K3$-surfaces it was shown that $\mathcal{P}$ is a homeomorphism onto an open and dense subspace $T\subset G^3_{o,+}$. This subspace is precisely given by those $3$-planes $\tau$ in $H^2(M,\mathbb{R})$ whose orthogonal complement $\tau^\bot$ does not intersect the root system of integral $(-2)$-classes given by $\Gamma=\{ \gamma \in H^2(M;\mathbb{Z})\  \vert \  (\gamma,\gamma)=-2)\}$, i.e.\ 
\[T=\{\tau \in G_{o,+}^3 \vert \tau^\bot \cap \Gamma=\emptyset\}.
\]
Thus one can think of the Teichmüller space $\mathcal{T}_{\r=0}(M)$ as a good space having holes of codimension $3$. To obtain the moduli space $\mathcal{M}_{\r=0}(M)$, note that two Einstein metrics $g_1,g_2$ on $M$ are isometric if there is an orientation preserving diffeomorphism $\phi \in \text{Diff}^{\ o}(M)$ so that $g_1$ is the pull back metric $\phi^*g_2$. Hence we get $\mathcal{M}_{\r=0}(M)$ if we divide out the action of $\text{Diff}^{\ o}(X)/\text{Diff}^{\ id}(X)$ on $\mathcal{T}_{\r=0}(M)$. From \cite{Barth.2004} we know that the group $\text{Diff}^{\ o}(M)/\text{Diff}^{\ id}(M)$ is isomorphic to $SO(\Lambda_{K3})$. Consequently we obtain that the moduli space of smooth Ricci-flat metrics is an open and dense subset of $SO(\Lambda_{K3})\backslash G_{o,+}^{3}$. Compare with \cite{Besse.2008}.
\\
\\
An obvious question now arising is, whether the holes in the moduli space, or the Teichmüller space, can be filled in a natural way. This was answered by Kobayashi and Todorov in \cite{Kobayashi.1987}, they showed that points in the complement of $T$ correspond to Einstein-orbifold-metrics on generalized $K3$-surfaces. A generalized $K3$-surface is a compact complex surface having at most rational double points and whose minimal resolution is a $K3$ surface and they showed that the moduli space including those metrics is isomorphic to 
\[
	SO(\Lambda_{K3})\backslash G_{o,+}^{3}.
\]
Notice that rationl double points in $dim=2$ are of the form $\mathbb{C}^2/G$, where $G$ is a finite subgroup of $SU(2)$. In particular these are orbifold singularities. \\
Later Anderson showed in \cite{Anderson.1992}, with more geometrical techniques, that this result can be obtained by studying converging sequences of Einstein metrics with respect to the $L^2$-metric or the Gromov-Hausdorff metric. In fact, he showed that the period map $\mathcal{P}$ extends to an isometry between the closure of $\mathcal{M}_{\r=0}(M)$ with respect to the intrinsic $L^2$-metric and the metric on $SO(\Lambda_{K3})\backslash G_{o,+}^{3}$ induced by the natural metric on $O(3,19)/SO(3)\times O(19)=G_{o,+}^{3}$. Thus we denote by $\overline{\mathcal{E}_{K3}}$ the moduli space of unit volume Einstein orbifold-metrics. Note that from \cite{Allan.1966} we know that $SO(\Lambda_{K3})$ acts properly and discontinuously on $G_{o,+}^3$ so that $\overline{\mathcal{E}_{K3}}$ is a $57$-dimensional orbifold. Further, note that if we drop the restriction of unit volume metrics we obtain an additional factor $\mathbb{R}^+$, so that the moduli space of Einstein orbifold metrics is given by
\[
	 SO(\Lambda_{K3}) \backslash O(3,19)/ SO(3)\times O(19) \times \mathbb{R}^+.
\]

\section{Topological Properties}
We show that $\overline{\mathcal{E}_{K3}}$ is simply connected and that $H^k(\overline{\mathcal{E}_{K3}},\mathbb{Q})\cong H^k(O(\Lambda_{K3}),\mathbb{Q})$.
\\
\\
Note that the Grassmann space of positive 3-planes in $G^+_{3}(\mathbb{R}^{3,19})$ can be written as the symmetric space $O(3,19)/O(3)\times O(19)$. Here we drop the condition on orientation.
\begin{proposition}
The moduli space of Ricci-flat metrics (including orbifold metrics) on a $K3$ surface is homeomorphic to $O(\Lambda_{K3})\backslash G^+_{3}(\mathbb{R}^{3,19})$. 
\end{proposition}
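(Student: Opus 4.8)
The plan is to compare the two descriptions through the orientation double cover
\[
\pi\colon G^3_{o,+}\longrightarrow G^+_3(\mathbb{R}^{3,19})
\]
that forgets the orientation of the positive $3$-plane. Recall that, expressed via orientation-preserving diffeomorphisms, the moduli space is $\overline{\mathcal{E}_{K3}}=SO(\Lambda_{K3})\backslash G^3_{o,+}$. The first step is to observe that an isometry of Riemannian metrics need not respect a chosen orientation of $M$, so that the genuine moduli space of (orbifold) Ricci-flat metrics is obtained from the Teichmüller space by dividing out \emph{all} of $\text{Diff}(M)/\text{Diff}^{\ id}(M)$, and not merely the orientation-preserving classes. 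The orientation-reversing mapping classes act on $H^2(M;\mathbb{Z})$ by lattice isometries of determinant $-1$, so the acting group is enlarged from $SO(\Lambda_{K3})$ to the full group $O(\Lambda_{K3})$; here a determinant $-1$ isometry also reverses the induced orientation of the self-dual plane $\mathcal{H}^+(g)$, since the Hodge star on $2$-forms in real dimension $4$ changes sign under reversal of orientation. Thus the moduli space is $O(\Lambda_{K3})\backslash G^3_{o,+}$.

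The key step is that the deck transformation $\iota$ of $\pi$, i.e.\ reversal of the orientation of the positive $3$-plane, is realized inside $O(\Lambda_{K3})$ by $-\mathrm{id}$. Indeed $-\mathrm{id}$ preserves the lattice and the pairing, fixes every positive $3$-plane setwise, and restricts to it as $-\mathrm{id}$, whose determinant is $(-1)^3=-1$, so it reverses the orientation; at the same time $\det(-\mathrm{id})=(-1)^{22}=1$, hence $-\mathrm{id}\in SO(\Lambda_{K3})\subset O(\Lambda_{K3})$. Since $-\mathrm{id}$ is central in $O(3,19)$, the transformation $\iota$ commutes with the $O(\Lambda_{K3})$-action, and $G^+_3=G^3_{o,+}/\langle\iota\rangle$ inherits an $O(\Lambda_{K3})$-action on which $-\mathrm{id}$ acts trivially. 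Because $\langle\iota\rangle$ already lies in the image of the $O(\Lambda_{K3})$-action, collapsing $O(\Lambda_{K3})$-orbits is the same as first forgetting orientation and then collapsing orbits: if $\pi(\tau')=g\,\pi(\tau)=\pi(g\tau)$ with $g\in O(\Lambda_{K3})$, then either $\tau'=g\tau$ or $\tau'=\iota(g\tau)=(-\mathrm{id})g\,\tau$, and in both cases $\tau'$ and $\tau$ lie in the same $O(\Lambda_{K3})$-orbit. Hence the natural map
\[
O(\Lambda_{K3})\backslash G^3_{o,+}\longrightarrow O(\Lambda_{K3})\backslash G^+_3,\qquad [\tau]\mapsto[\pi(\tau)],
\]
is a bijection.

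It remains to upgrade this bijection to a homeomorphism. As $\pi$ is a continuous open surjection which is $O(\Lambda_{K3})$-equivariant, the induced map on quotients is continuous and open, hence a homeomorphism; that the quotient topologies behave well follows from the proper discontinuity of the $O(\Lambda_{K3})$-action on $G^+_3$, which holds as in \cite{Allan.1966} since $O(\Lambda_{K3})$ contains $SO(\Lambda_{K3})$ with finite index. I expect the main obstacle to lie not in this formal quotient argument but in the first step: one must justify carefully that passing to genuine isometry classes enlarges the acting group \emph{precisely} to $O(\Lambda_{K3})$, i.e.\ that every determinant $-1$ lattice isometry is induced by an orientation-reversing diffeomorphism and that such a diffeomorphism reverses the orientation of $\mathcal{H}^+(g)$. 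Once the acting group is identified as the full $O(\Lambda_{K3})$, the homeomorphism $O(\Lambda_{K3})\backslash G^3_{o,+}\cong O(\Lambda_{K3})\backslash G^+_3$ is a formal consequence of the fact that the orientation reversal $\iota$ is internal to the group through $-\mathrm{id}$.
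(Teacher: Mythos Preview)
Your second step---using that $-\mathrm{id}$ is central in $O(3,19)$, lies in $O(\Lambda_{K3})$, and realizes the deck transformation $\iota$ of $G^3_{o,+}\to G^+_3$---is correct and is exactly the mechanism the paper exploits as well. The problem is your first step. A $K3$ surface admits \emph{no} orientation-reversing self-diffeomorphism: if $\phi$ were such a map, then for $\alpha,\beta\in H^2(M;\mathbb{Z})$ one would have
\[
(\phi^*\alpha,\phi^*\beta)=\int_M\phi^*(\alpha\cup\beta)=-\int_M\alpha\cup\beta=-(\alpha,\beta),
\]
so $\phi^*$ would be an \emph{anti}-isometry of $\Lambda_{K3}$, hence an isometry $\Lambda_{K3}\cong -\Lambda_{K3}$. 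But $-\Lambda_{K3}$ has signature $(19,3)\neq(3,19)$, so no such $\phi$ exists. Consequently $\text{Diff}(M)=\text{Diff}^{\,o}(M)$, and your proposed enlargement of the acting group from $SO(\Lambda_{K3})$ to $O(\Lambda_{K3})$ ``via orientation-reversing mapping classes'' simply does not happen; in particular, determinant $-1$ lattice isometries are \emph{not} induced by diffeomorphisms of $M$. You correctly flagged this step as the delicate one, but it is not merely delicate---it is false as stated.

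The paper sidesteps this entirely by staying at the level of the biquotient. Writing $G^3_{o,+}=O(3,19)/SO(3)\times O(19)$ and $G^+_3=O(3,19)/O(3)\times O(19)$, it starts from $\overline{\mathcal{E}_{K3}}=SO(\Lambda_{K3})\backslash O(3,19)/SO(3)\times O(19)$ and then uses centrality of $-\mathrm{id}$ (which lies in $SO(\Lambda_{K3})$ on the left and represents the nontrivial coset of $SO(3)\times O(19)$ in $O(3)\times O(19)$ on the right) to enlarge both the left and right quotient groups simultaneously to $O(\Lambda_{K3})$ and $O(3)\times O(19)$. No appeal to diffeomorphisms is made; the passage to $O(\Lambda_{K3})$ is a purely formal manoeuvre with central elements, not a geometric one. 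Your formal argument in the second half would slot in cleanly once the correct starting point is in hand, but your geometric route to that starting point does not work for $K3$.
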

\begin{proof}
From the discussion above we know that we can identify the moduli space of Ricci-flat metrics (including orbifold metrics) $\overline{\mathcal{E}_{K3}}$ with the orbit space 
\[
SO(\Lambda_{K3})\backslash G^+_{o,3}(\mathbb{R}^{3,19}).
\]
Since $O(3,19)$ acts transitively on $G^{+}_{o,3}(\mathbb{R}^{3,19})$ with isotropy group $SO(3)\times O(19)$ we can identify the Grassman space $G^+_{o,3}(\mathbb{R}^{3,19})$ with the symmetric space 
\[
O(3,19)/SO(3)\times O(19).
\]
Therefore $\overline{\mathcal{E}_{K3}}$ coincides with the bi-quotient
\[
SO(\Lambda_{K3})\backslash O(3,19)/SO(3)\times O(19)
\]
and  as both $-id \in SO(\Lambda_{K3})$ and $(id,-id)\in SO(3)\times O(19)$ lie in the center of $O(3,19)$ we can in fact identify $\overline{\mathcal{E}_{K3}}$ with
\[
O(\Lambda_{K3})\backslash O(3,19)/O(3)\times O(19).
\]
The symmetric space $O(3,19)/O(3)\times O(19)$ on the other hand is just $G^+_{3}(\mathbb{R}^{3,19})$ the Grassmann space of positive $3$-planes in $\mathbb{R}^{3,19}$ where we forget about orientation.
\end{proof}
From now on let $\Gamma\coloneqq O(\Lambda_{K3})$ and $X\coloneqq O(3,19)/O(3)\times O(19)$. 
\begin{proposition}
The projection $\pi\colon X\to X/\Gamma$ has the path lifting property.
\end{proposition}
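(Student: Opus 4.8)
The plan is to exploit that $\Gamma = O(\Lambda_{K3})$ acts properly discontinuously and by isometries on the symmetric space $X = O(3,19)/O(3)\times O(19)$, and to equip $X/\Gamma$ with the quotient length metric $\bar d(\Gamma x,\Gamma y) = \inf_{g\in\Gamma} d(x,gy)$, where $d$ is the complete, nonpositively curved symmetric-space distance. Proper discontinuity guarantees that this infimum is attained, so that $\pi$ becomes a submetry, $\pi(\overline{B}(x,r)) = \overline{B}(\pi(x),r)$ for every $x$ and $r\ge 0$, and in particular $\bar d(\pi(u),\pi(v)) = \min_{g\in\Gamma} d(u,gv)$ is exactly the distance from $u$ to the orbit $\Gamma v$. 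I would use this orbit-distance identity as the main analytic tool.

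First I would localize. For each $x\in X$ proper discontinuity provides a radius $r_x>0$ such that $g\,B(x,r_x)\cap B(x,r_x)\neq\emptyset$ only when $g$ lies in the finite stabilizer $\Gamma_x$; shrinking $r_x$ I may assume $B(x,r_x)$ is a convex geodesic ball that is $\Gamma_x$-invariant, that the translates $gB(x,r_x)$ over the cosets $g\Gamma_x$ are pairwise disjoint, and hence that $\pi^{-1}\bigl(\pi(B(x,r_x))\bigr)=\bigsqcup_{g\Gamma_x} gB(x,r_x)$ with $\pi(B(x,r_x)) \cong B(x,r_x)/\Gamma_x$. Via the $\Gamma_x$-equivariant diffeomorphism $\exp_x$, the local picture becomes the linear quotient $\mathbb{R}^{57}\to\mathbb{R}^{57}/\Gamma_x$ with $\Gamma_x\subset O(57)$ finite, reducing the statement to a local path-lifting lemma for finite orthogonal quotients near a fixed point.

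Then I would run a maximal-lift argument. Given a path $\bar\gamma\colon[0,1]\to X/\Gamma$ and a point $x_0$ over $\bar\gamma(0)$, I consider partial lifts $\tilde\gamma$ defined on $[0,s)$ or $[0,s]$ with $\tilde\gamma(0)=x_0$, ordered by extension, and take a maximal one by Zorn. If it is defined on a half-open $[0,s)$, the orbit-distance formula forces convergence: for $t$ near $s$ the images $\bar\gamma(t)$ lie in a small ball about $\bar\gamma(s)$, whose finitely many preimages $w_1,\dots,w_k$ in a compact neighborhood have disjoint $\rho$-balls; connectedness of $[t_1,s)$ confines $\tilde\gamma$ to a single $B(w_i,\rho)$, and there $d(\tilde\gamma(t),w_i)=\bar d(\bar\gamma(t),\bar\gamma(s))\to 0$, so $\tilde\gamma(t)\to w_i$ and the lift extends continuously to $[0,s]$. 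If instead it is defined on a closed $[0,s]$ with $s<1$, I extend forward in the local model at $w=\tilde\gamma(s)$: any orbit selection of the nearby path has norm $\bar d(\bar\gamma(t),\bar\gamma(s))\to 0$ as $t\to s$, so it is automatically continuous at $s$, yielding a genuine forward extension and contradicting maximality unless $s=1$.

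The main obstacle is precisely this forward extension through the singular strata, that is, producing a continuous lift of an arbitrary and possibly non-rectifiable path across points whose stabilizer $\Gamma_x$ is nontrivial, where $\pi$ is not a local homeomorphism but a genuine finite branched quotient. Away from the singular set $\pi$ restricts to a covering map and lifting is routine; the care is needed exactly on the closed, possibly Cantor-like, set of times at which $\bar\gamma$ meets the image of the fixed-point set. I expect the orbit-distance identity together with the equivariance of $\exp_x$ to resolve this, reducing each crossing to lifting into the cone $\mathbb{R}^{57}/\Gamma_x$, where continuity at the vertex is automatic because the intrinsic radial distance to the cone point tends to zero.
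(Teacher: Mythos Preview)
Your localization to finite stabilizers is correct and is essentially the slice reduction the paper uses (via Palais). The backward step in your Zorn argument, extending a lift on $[0,s)$ to $[0,s]$ using the orbit-distance identity, also works. But the forward-extension step does not. You claim that for $t>s$ near $s$, \emph{any} orbit selection $\sigma(t)\in\pi^{-1}(\bar\gamma(t))\cap B(w,r_w)$ is continuous at $s$ because $\|\sigma(t)-w\|=\bar d(\bar\gamma(t),\bar\gamma(s))\to 0$. That is true, but it says nothing about continuity of $\sigma$ at any point of the open interval $(s,s+\epsilon)$: an arbitrary orbit selection is typically discontinuous there, so you have produced only a function continuous at the single point $s$, not a continuous lift on $[s,s+\epsilon]$. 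The same difficulty recurs inside the cone $\mathbb{R}^{57}/\Gamma_x$, which has singular strata other than the vertex (the fixed loci of nontrivial subgroups of $\Gamma_x$); ``continuity at the vertex is automatic'' does not help you cross those.

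What you actually need, and what you correctly flag as the crux, is path lifting for the quotient of a ball by a \emph{finite} orthogonal group. That is a genuine theorem, not a one-line observation. The paper takes exactly this shortcut: after invoking the slice theorem to model a neighborhood of $[x]$ as $S/\Gamma_x$ with $\Gamma_x$ finite, hence a compact Lie group, it cites Bredon, \emph{Introduction to Compact Transformation Groups}, Chapter~II, Theorem~6.2, for path lifting through compact Lie group quotients, and then compactness of $[0,1]$ patches the local lifts into a global one. Your submetry and orbit-distance machinery is sound but in the end only rederives the slice picture; it does not let you bypass the compact-group lifting theorem, which is where the actual content sits.
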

\begin{proof}
First notice that when we can lift a path locally we can lift it globally as $I$ is compact.
From \cite{Palais.1961} we know that the group $\Gamma$ has the slice property, i.e.\ that any point $x\in X$ has a slice $S$ such that a neighborhood of $[x]$ in the orbit space is homeomorphic to $S/\Gamma_x$. Since $\Gamma_x$ is finite, so in particular a compact Lie Group we can lift the path locally to $S$ by \cite[Chapter II Theorem 6.2]{Bredon.1972} and hence globally. 
\end{proof}
\begin{theorem}
The moduli space of Ricci-flat metrics (including orbifold metrics) on a $K3$ surface is simply connected.
\end{theorem}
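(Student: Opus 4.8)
The plan is to exploit the fact that $X$ is contractible: being the symmetric space $O(3,19)/O(3)\times O(19)$ of non-compact type, it is diffeomorphic to the Euclidean space $\mathbb{R}^{57}$, and in particular it is simply connected. The whole difficulty then lies in controlling how much fundamental group the orbit map can create. Since $\Gamma$ acts properly discontinuously (as recorded above, via \cite{Allan.1966}) and the projection $\pi\colon X\to X/\Gamma$ has the path lifting property (the preceding proposition), I would establish the Armstrong-type identification
\[
\pi_1(X/\Gamma)\ \cong\ \Gamma/H,
\]
where $H\trianglelefteq\Gamma$ is the (automatically normal) subgroup generated by those elements of $\Gamma$ that fix at least one point of $X$. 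Granting this, proving simple connectivity reduces entirely to the group-theoretic statement $H=\Gamma$, i.e.\ to exhibiting a generating set of $\Gamma$ consisting of elements with a fixed point in $X$.

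To prove the identification I would argue with lifts (this is precisely Armstrong's theorem, whose hypotheses---simply connected, locally compact, locally connected, proper discontinuity---are all met, and which I could also cite directly). Given a loop $\alpha$ in $X/\Gamma$ based at $[x_0]$, the path lifting property produces a lift $\tilde\alpha$ starting at $x_0$ and ending at some point of the fibre $\Gamma\cdot x_0$, say $g\cdot x_0$. Because $X$ is simply connected, the assignment $[\alpha]\mapsto gH$ is well defined: the indeterminacy of the lift near singular orbits and the effect of a homotopy rel basepoint both alter $g$ only by elements that fix a point, hence by elements of $H$. This yields a homomorphism $\pi_1(X/\Gamma)\to\Gamma/H$ which one checks is an isomorphism.

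The core of the argument is then purely about the arithmetic group $\Gamma=O(\Lambda_{K3})$. I would take as generators the reflections $s_\gamma$ in vectors $\gamma\in\Lambda_{K3}$ of norm $(\gamma,\gamma)=\pm 2$, which are genuine lattice automorphisms since for $x\in\Lambda_{K3}$ the coefficient $2(x,\gamma)/(\gamma,\gamma)=\pm(x,\gamma)$ is an integer, together with $-\mathrm{id}$. Each of these has a fixed point in $X$: the element $-\mathrm{id}$ fixes every $3$-plane setwise; for $(\gamma,\gamma)=-2$ the fixed hyperplane $\gamma^\perp$ has signature $(3,18)$ and hence contains a positive $3$-plane, which $s_\gamma$ fixes pointwise; and for $(\gamma,\gamma)=+2$ one takes a positive $3$-plane of the form $\mathbb{R}\gamma\oplus P$ with $P\subset\gamma^\perp$ a positive $2$-plane, for which $s_\gamma$ sends $\gamma\mapsto-\gamma$ and fixes $P$, so the plane is preserved as an \emph{unoriented} plane. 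This last point is exactly why it was essential to pass (in the first Proposition) to the full group $O(\Lambda_{K3})$ acting on the unoriented Grassmannian $X$ rather than to $SO(\Lambda_{K3})$ on oriented planes, where such a reflection would reverse orientation and have no fixed point.

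The main obstacle I expect is the remaining generation statement: that $O(\Lambda_{K3})$ is indeed generated by the reflections in $(\pm2)$-vectors together with $-\mathrm{id}$. This is the arithmetic heart of the proof and rests on the special structure of the even unimodular lattice $\Lambda_{K3}=U^{\oplus 3}\oplus(-E_8)^{\oplus 2}$, whose abundance of roots makes it reflective. I would either cite the classification of generators of orthogonal groups of indefinite unimodular lattices (in the spirit of Wall and Kneser) or verify it directly, taking care that the real spinor norm and the determinant together distinguish the four components of $O(3,19)$, and that reflections in positive roots, in negative roots, and $-\mathrm{id}$ suffice to realise every component that $\Gamma$ actually meets. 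Once this is in place we obtain $H=\Gamma$, and the theorem follows.
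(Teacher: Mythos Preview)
Your proposal is correct and follows essentially the same route as the paper: both invoke Armstrong's argument to reduce to showing that $\Gamma=O(\Lambda_{K3})$ is generated by elements with fixed points in $X$, both take the $(\pm 2)$-reflections as generators, and both verify the fixed-point condition by the same signature count on $\delta^\perp$. The paper simply cites Wall (1964) for the generation statement (so your inclusion of $-\mathrm{id}$ is redundant), and it constructs the surjection $\Gamma\to\pi_1(X/\Gamma)$ by hand rather than stating the quotient $\Gamma/H$ form of Armstrong's theorem, but these are cosmetic differences.
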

\begin{proof}
From \cite{.1964} we know that the group $O(\Lambda_{K3})$ is generated by reflections 
\[
s_\delta\colon x \mapsto x + (x.\delta)\delta
\]
with $\delta$ satisfying $(\delta.\delta)=\pm2$. Since for $(\delta.\delta)=-2$ there is a positive $3$-plane contained in the reflection hyperplane $\delta ^\bot$ and for the other case $(\delta.\delta)=2$, there is a $2$-dimensional positive subspace $V'\subset \delta ^\bot$ so that $V \coloneqq V'\oplus \delta\mathbb{R}$ is fixed by $s_\delta$. Now for every such reflection $s_\delta$ there is some $V\in G^+_{3}(\mathbb{R}^{3,19})$ such that $s_\delta(V)=V$. Thus there is a generating set of $O(\Lambda_{K3})$ acting properly and discontinuously on a contractible space for which each element has a fixed point. Now, basically following \cite{Armstrong.1982}, we show that the quotient space of such an action is simply connected. \\
We define a surjective group homomorphism 
\[
\pi\colon \Gamma \to \pi_1(X/\Gamma).
\]
by fixing some $x_0$ in $X$ and for any $\gamma \in \Gamma$ we choose a path $\alpha$ connecting $x_0$ and $\gamma x_0$. On the quotient the path $\alpha$ will then define a loop and it is not hard to see that this definition is independent of the choice of $\alpha$ and $x_0$. To see surjectivity, we know from the above proposition that the quotient map $X\to X/\Gamma$ has the path lifting property, so that any loop $\bar \alpha$ lifts to a path $\alpha$ connecting $x_0$ with $\gamma_\alpha x_0$ for some $\gamma_\alpha \in \Gamma$. Let $\gamma$ be an element in $\Gamma$ which has a fixed point $x$ in $X$. Then because of independencies of the choices for $\pi$ we can choose $x=x_0$ and the constant path $\alpha_\gamma$ connecting $x$ with $\gamma x$, so that $[\alpha_\gamma]=1$ in the fundamental group. By the discussion above we know that $\Gamma$ is generated by such elements and thus $Ker(\pi)=\Gamma$ and hence $\pi_1(X/\Gamma)=\{1\}$. 
\end{proof}

For the cohomology of the space $\overline{\mathcal{E}_{K3}}$ we can show that it coincides, at least rationally, with that of an arithmetic subgroup of $O(3,19)$, i.e.\ 
\begin{theorem}
The rational cohomology of $\overline{\mathcal{E}_{K3}}$ coincides with the rational group cohomology of $O(\Lambda_{K3})$.
\end{theorem}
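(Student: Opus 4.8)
The plan is to exploit that $\overline{\mathcal{E}_{K3}}=\Gamma\backslash X$ is the quotient of a contractible space by a discrete group acting properly discontinuously with finite isotropy groups, and to invoke the standard comparison between the rational cohomology of such a quotient and the rational group cohomology of $\Gamma$. Over $\mathbb{Z}$ the finite stabilizers contribute torsion, so the statement is genuinely a rational one.

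First I would record the two structural facts that make the argument run. On the one hand, $X=O(3,19)/O(3)\times O(19)$ is the symmetric space $G_3^+(\mathbb{R}^{3,19})$ of positive definite $3$-planes, which is diffeomorphic to $\mathbb{R}^{57}$ and in particular contractible. On the other hand, as already noted in the text following \cite{Allan.1966}, the group $\Gamma=O(\Lambda_{K3})$ acts properly and discontinuously on $X$; since each isotropy group $\Gamma_x$ is the intersection of the discrete group $\Gamma$ with a conjugate of the compact subgroup $O(3)\times O(19)$, it is both discrete and compact, hence finite. This finiteness, together with working over $\mathbb{Q}$, is the key input.

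The main step is then to compare the honest quotient $\Gamma\backslash X$ with the homotopy quotient (Borel construction) $E\Gamma\times_\Gamma X$ by means of two isomorphisms. Because $X$ is contractible, the bundle projection $E\Gamma\times_\Gamma X\to B\Gamma$ has contractible fibers, hence is a homotopy equivalence, giving
\[
H^*_\Gamma(X;\mathbb{Q})=H^*(E\Gamma\times_\Gamma X;\mathbb{Q})\cong H^*(B\Gamma;\mathbb{Q})=H^*(\Gamma;\mathbb{Q}).
\]
Conversely, the natural map $p\colon E\Gamma\times_\Gamma X\to \Gamma\backslash X$ onto the coarse quotient has, over an orbit $[x]$, fiber homotopy equivalent to the classifying space $B\Gamma_x$ of the finite stabilizer. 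Over $\mathbb{Q}$ one has $H^{>0}(B\Gamma_x;\mathbb{Q})=0$ since $\Gamma_x$ is finite, so every fiber is rationally acyclic. The Leray spectral sequence of $p$ then has $E_2^{s,t}=H^s(\Gamma\backslash X;\mathcal{H}^t)$ with $\mathcal{H}^t=0$ for $t>0$, so it collapses and $p$ becomes a rational cohomology isomorphism, i.e. $H^*(\Gamma\backslash X;\mathbb{Q})\cong H^*_\Gamma(X;\mathbb{Q})$. Chaining the two isomorphisms yields
\[
H^*(\overline{\mathcal{E}_{K3}};\mathbb{Q})\cong H^*(O(\Lambda_{K3});\mathbb{Q}).
\]

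The hard part will be justifying the collapse in the last move rigorously, since it rests on the fibers of $p$ being rationally acyclic and on the action being tame enough for the Leray spectral sequence to apply; concretely one must know that $\Gamma\backslash X$ is locally contractible and finite-dimensional and identify the fibers of the Borel construction with the $B\Gamma_x$. The slice property of \cite{Palais.1961} cited above, which presents a neighborhood of $[x]$ as $S/\Gamma_x$ for a slice $S$, is exactly what provides the local model needed to identify the fibers and to run the spectral-sequence comparison. An alternative route, avoiding the spectral sequence, is to fix a $\Gamma$-CW structure on $X$ with finite isotropy and invoke the standard result (e.g. in Brown's \emph{Cohomology of Groups}) that for such actions on a contractible complex the rational equivariant and quotient cohomologies agree, precisely because the orders of the finite stabilizers are invertible in $\mathbb{Q}$.
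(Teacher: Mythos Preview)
Your argument is correct and rests on the same underlying idea as the paper's: since $X$ is contractible and the $\Gamma$-action is proper with finite stabilizers, the quotient $\Gamma\backslash X$ and the classifying space $B\Gamma$ have the same rational cohomology. The difference is one of packaging. The paper observes that $X$ is a model for the classifying space $E_{\mathcal{FIN}}\Gamma$ of proper $\Gamma$-actions (citing L\"uck) and then invokes a lemma of L\"uck stating that the canonical map $B\Gamma=\Gamma\backslash E\Gamma\to\Gamma\backslash E_{\mathcal{FIN}}\Gamma$ is a rational cohomology isomorphism; this is a one-line black-box proof. Your Borel-construction comparison $E\Gamma\times_\Gamma X\to\Gamma\backslash X$ together with the Leray spectral sequence (or the alternative via a $\Gamma$-CW structure and Brown's theorem) is precisely what lies behind that lemma, so you have effectively unpacked the citation. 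The advantage of your route is that it is self-contained and makes the role of the finite stabilizers transparent; the advantage of the paper's route is brevity and that it situates the result within the general framework of classifying spaces for families.
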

\begin{proof}
The space $X=O(3,19)/O(3)\times O(19)$ is a model for the classifying space of proper $\Gamma=O(\Lambda_{K3})$ actions $E_\mathcal{FIN}\Gamma$, 
see \cite[Theorem 4.4]{Luck.2005}. It further follows that there is, up to $\Gamma$-homotopy, a unique $\Gamma$-map
\[
 E_\mathcal{TR}\Gamma \to E_{FIN}\Gamma,
\]
where $E_\mathcal{TR}\Gamma=E\Gamma$ is the universal cover of $B\Gamma$, the classifying space of the group $\Gamma$. 
By \cite[Lemma 4.14]{Luck.2007} the induced map on the orbit spaces, 
\[
	\Gamma \backslash E\Gamma \to \Gamma \backslash E_\mathcal{FIN}\Gamma,
\]
yields an isomorphism in rational cohomology. From the discussion above, it follows  
\[
	H^i(\overline{\mathcal{E}_{K3}},\mathbb{Q})\cong H^i(BO(\Lambda_{K3}),\mathbb{Q})\cong H^i(O(\Lambda_{K3});\mathbb{Q}).
\]
\end{proof}
Also note that as the group $O(\Lambda_{K3})$ is an arithmetic subgroup of $O(3,19)$, it has Kazhdan's Property (T) from which one also obtains that the first Betti number of $\overline{\mathcal{E}_{K3}}$ vanishes. But it would be interesting to know if there is anything which can be said about higher Betti numbers of the group $O(\Lambda_{K3})$, as this would give some information on the higher rational homotopy groups of $\overline{\mathcal{E}_{K3}}$. 
\subsubsection*{Acknoledgement}
The author acknowledges funding by the Deutsche Forschungsgemeinschaft (DFG, German Research Foundation) – 281869850 (RTG 2229).
\bibliography{Literatur}

\bibliographystyle{plainurl}

\end{document}